\newtheorem{theorem}{Theorem}
\newtheorem{lemma}[theorem]{Lemma}
\newtheorem{definition}[theorem]{Definition}
\newtheorem{corollary}[theorem]{Corollary}
\newtheorem{example}[theorem]{Example}
\newcommand\RR{\mathbb{R}}
\newcommand\CC{\mathbb{C}}
\newcommand\ZZ{\mathbb{Z}}
\begin{document}

\allowdisplaybreaks[4]

\author{D. Barbieri\footnote{Universidad Aut\'onoma de Madrid, 28049 Madrid. E-mail\,$:$ davide.barbieri@uam.es}, E. Hern\'andez\footnote{Universidad Aut\'onoma de Madrid, 28049 Madrid. E-mail\,$:$ eugenio.hernandez@uam.es}\, and A. Mayeli\footnote{City University of New York, Queensborough and the Graduate Center. E-mail\,$:$ amayeli@gc.cuny.edu}}

\title{Lattice sub-tilings and frames in LCA groups}

\maketitle

\begin{abstract}
Given a lattice $\Lambda$ in a locally compact abelian group $G$ and a measurable subset $\Omega$ with finite and positive measure, then the set of characters associated to the dual lattice form a frame for $L^2(\Omega)$ if and only if the distinct translates by $\Lambda$ of $\Omega$ have almost empty intersections. Some consequences of this results are the well-known Fuglede theorem for lattices, as well as a simple characterization for frames of modulates.\\
\emph{Keywords}: Tiling sets, frames of exponentials, systems of translates.\\
\emph{MSC 2010}: 43A25, 42C15, 52C22
\end{abstract}

\section{Introduction}

Let $G$ denote a locally compact and second countable abelian group (LCA group). A closed subgroup $\Lambda$  of $G$  is called a \emph{lattice} if it is  discrete and  co-compact, i.e, the quotient group $G/\Lambda$ is compact. Recall that, since $G$ is second countable, then any discrete subgroup of $G$ is also countable (see e.g. \cite[Section 12, Example 17]{Pontryagin}). Assume that $G$ is abelian, and denote the dual group by $\widehat G$. The dual lattice of $\Lambda$ is defined as follows:
\begin{align}
\Lambda^\perp=\{ \chi\in \widehat G: \  \langle \chi,\lambda\rangle=1
\ \forall \lambda\in \Lambda\},
\end{align}
where $\langle \chi, \lambda\rangle$ indicates the action of  character $\chi$ on the group element $\lambda$. 

We recall that, by the duality theorem between subgroups and quotient groups (see e.g. \cite[Lemma 2.1.2]{Rudin-FA}), the dual lattice $\Lambda^\perp$ is a subgroup of $\widehat G$ that is topologically isomorphic to the dual group of $G/\Lambda$, i.e., $\Lambda^\perp \cong
\widehat{G/\Lambda}$. Moreover, since $G/\Lambda$ is compact, the dual lattice $\Lambda^\perp$ is discrete. Notice also that $\widehat G/\Lambda^\perp \cong \widehat\Lambda$, which implies that $\Lambda^\perp$ is co-compact, hence it is a lattice. \\

Let $dg$ denote a Haar measure on $G$. For a function $f$ in $L^1(G)$, the Fourier transform of $f$ is defined by
$$
\mathcal F_G(f)(\chi) = \int_G f(g) \overline {\langle \chi, g\rangle}\, dg\,, \qquad \chi\in \widehat G,
$$
where $\langle \chi, g\rangle$ denotes the action of the character $\chi$ on $g.$ By the inversion theorem \cite[Section 1.5.1]{Rudin-FA}, a Haar measure $d\chi$ can be chosen on $\widehat G$ so that the Fourier transform $\mathcal F_G$ is an isometry from $L^2(G)$ onto $L^2(\widehat G).$ More precisely, 
\begin{align} \label{Plancherel}
\langle \mathcal F_G(f) , \mathcal F_G(g) \rangle_{L^2(\widehat G,d\chi)} =
\langle f , g \rangle_{L^2(G, dg)} \qquad f, g \in L^2(G)\,.
\end{align}

For any $\chi\in \widehat G$, we define the {\it exponential function} $e_{\chi}$ by
$$
e_{\chi}: G\to \CC, \quad \quad e_{\chi}(g):= \langle \chi, g\rangle.
$$
For any measurable subset $\Omega$ of $G$, we let $|\Omega|$ denote the Haar measure of $\Omega$. Throughout this paper, we let ${\bf 1}_\Omega$ denote the characteristic function of the set $\Omega$. We shall also use the addition symbol '$+$' for the group action, and $0$ for the neutral element, since $G$ is abelian.

\begin{definition}[Sub-Tiling]
Let $\Omega\subset G$ be a measurable set with finite and positive Haar measure, and let $\Lambda$ be a lattice subgroup of $G$. We say that $(\Omega, \Lambda)$ is a {\it sub-tiling pair} for $G$ if
\begin{align}\label{tiling-property}
\sum_{\lambda\in\Lambda} {\bf 1}_\Omega(g-\lambda) \leq 1 \quad a.e. \
g\in  G\,.
\end{align}
\end{definition}
By replacing the inequality with an equality, the definition is that of a \emph{tiling} pair. In this weaker form, it is equivalent to say that the translates of $\Omega$ by elements of $\Lambda$ are a.e. disjoint, i.e. $(\Omega, \Lambda)$ is a sub-tiling pair for $G$ if and only if
$$
|\Omega \cap (\Omega + \lambda)| = 0 \quad \forall \ \lambda \in \Lambda \, , \ \lambda \neq 0 .
$$
Observe also that any sub-tiling set is a subset of a tiling set.

\

Our main result is the following. Recall that a cross section $Q_\Lambda \subset G$ for a group $G$ and a lattice $\Lambda$ is a measurable set of representatives of $G/\Lambda$.

\begin{theorem}[Main Result]\label{main} Let $\Lambda$ be a lattice in $G$, let $\Omega \subset G$ be a set with finite and positive measure, and let $Q_\Lambda \subset G$ be a cross section for $G/\Lambda$. Then the following are equivalent.
\begin{itemize}
\item[1)] The pair $(\Omega,\Lambda)$ is sub-tiling for $G$.
\item[2)] For a.e. $\chi\in \widehat G$ it holds
$$
\sum_{\tilde\lambda\in \Lambda^\perp} |\mathcal F_G({\bf 1}_\Omega) (\chi+\tilde\lambda)|^2 = |Q_\Lambda|\,|\Omega| .
$$
\item[3)] The system of translates
$\{\sqrt{|\Omega|}^{~-1}{\bf 1}_\Omega (\cdot-\lambda): \ \lambda\in \Lambda\}$ is orthonormal in $L^2(G)$.
\item[4)] The exponential set $E_{\Lambda^\perp} = \{e_{\tilde\lambda}: \tilde\lambda\in\Lambda^\perp\}$ is a frame for $L^2(\Omega)$.
\end{itemize}
Moreover, if any of the above conditions holds, then the frame in point 4) is tight, with constant $|Q_\Lambda|$.
\end{theorem}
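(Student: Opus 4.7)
The plan is to prove $1)\Leftrightarrow 3)$, $3)\Leftrightarrow 2)$ and $1)\Leftrightarrow 4)$, the last equivalence also producing the tight constant $|Q_\Lambda|$. A basic tool I will invoke throughout is the Haar-measure duality $|Q_\Lambda|\,|Q_{\Lambda^\perp}|=1$ implicit in the Plancherel normalization~\eqref{Plancherel}. The equivalence $1)\Leftrightarrow 3)$ is immediate: $\langle \mathbf{1}_\Omega(\cdot-\lambda),\mathbf{1}_\Omega(\cdot-\lambda')\rangle = |\Omega\cap(\Omega+\lambda'-\lambda)|$, so after normalizing by $\sqrt{|\Omega|}$ orthonormality of the translates becomes equivalent to $|\Omega\cap(\Omega+\lambda)|=0$ for all $\lambda\in\Lambda\setminus\{0\}$, i.e.\ to sub-tiling.

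For $3)\Leftrightarrow 2)$, the function $P(\chi):=\sum_{\tilde\lambda\in\Lambda^\perp}|\mathcal F_G(\mathbf{1}_\Omega)(\chi+\tilde\lambda)|^2$ is $\Lambda^\perp$-periodic and lies in $L^1$ of the compact quotient $\widehat G/\Lambda^\perp\cong\widehat\Lambda$. I would compute its $\lambda$-th Fourier coefficient ($\lambda\in\Lambda$) by unfolding the periodization and applying~\eqref{Plancherel}; the outcome is a fixed multiple of $\langle \mathbf{1}_\Omega,\mathbf{1}_\Omega(\cdot-\lambda)\rangle = |\Omega\cap(\Omega-\lambda)|$. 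Thus $P$ is a.e.\ constant if and only if all coefficients with $\lambda\neq 0$ vanish, which is precisely 3); the value of the constant is then pinned by the zeroth coefficient to $|\Omega|/|Q_{\Lambda^\perp}| = |Q_\Lambda|\,|\Omega|$.

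For $1)\Leftrightarrow 4)$, I extend $f\in L^2(\Omega)$ by zero to $G$ and decompose $G = \bigsqcup_{\lambda\in\Lambda}(Q_\Lambda+\lambda)$. Using $\langle\tilde\lambda,q+\lambda\rangle=\langle\tilde\lambda,q\rangle$ for $\tilde\lambda\in\Lambda^\perp$, each $\langle f, e_{\tilde\lambda}\rangle_{L^2(\Omega)}$ is identified with the Fourier coefficient at $\tilde\lambda$ of the $\Lambda$-periodization $\tilde f(q):=\sum_{\lambda\in\Lambda} f(q+\lambda)$ viewed as a function on $G/\Lambda$. Parseval on this compact group then yields
$$
\sum_{\tilde\lambda\in\Lambda^\perp}|\langle f, e_{\tilde\lambda}\rangle|^2 \;=\; |Q_\Lambda|\int_{Q_\Lambda}|\tilde f(q)|^2\,dq,
$$
which is then compared with $\|f\|^2 = \int_{Q_\Lambda}\sum_\lambda|f(q+\lambda)|^2\,dq$. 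Under sub-tiling, for a.e.\ $q$ at most one summand $f(q+\lambda)$ is nonzero, so $|\tilde f(q)|^2 = \sum_\lambda|f(q+\lambda)|^2$ and 4) holds as a tight frame with constant $|Q_\Lambda|$. Conversely, if sub-tiling fails one can select a positive-measure $E\subset\Omega$ and $\lambda_0\in\Lambda\setminus\{0\}$ with $E+\lambda_0\subset\Omega$ and $E\cap(E+\lambda_0)=\emptyset$; the test function $f=\mathbf{1}_E-\mathbf{1}_{E+\lambda_0}$ then has $\tilde f=0$ yet $\|f\|^2>0$, violating any lower frame bound.

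The core arguments are elementary once the right identifications are in place; the main difficulty I anticipate is the careful bookkeeping of Haar-measure normalizations across $G$, $\widehat G$, $G/\Lambda$ and $\Lambda^\perp$, so that the duality $|Q_\Lambda|\,|Q_{\Lambda^\perp}|=1$ and the Parseval constant on the compact quotient produce exactly $|Q_\Lambda|\,|\Omega|$ in 2) and $|Q_\Lambda|$ in 4) and not some disguised rescaling of them. A secondary but routine step is arranging the disjointness condition on $E$ in the converse of $1)\Leftrightarrow 4)$, which can be handled by first restricting to a cross-section of $\Lambda$.
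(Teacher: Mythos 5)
Your proposal is correct, and it reaches the same endpoints as the paper by a noticeably different organization of the argument. The paper runs the cycle 1)\,$\Rightarrow$\,2)\,$\Rightarrow$\,3)\,$\Rightarrow$\,1) together with 1)\,$\Leftrightarrow$\,4): the implications 1)\,$\Rightarrow$\,4) and 1)\,$\Rightarrow$\,2) both come from the single fact that $E_{\Lambda^\perp}$ is an orthogonal basis of $L^2(Q_\Lambda)$ and that a sub-tiling $\Omega$ sits (up to a null set) inside a cross section, so the Parseval identity (\ref{eq:Plancherel}) simply restricts to $L^2(\Omega)$ (with $f=\overline{e_\chi}{\bf 1}_\Omega$ giving 2)); and 2)\,$\Rightarrow$\,3) is delegated to the bracket-map machinery of \cite{HSWW} via Corollary \ref{orthonormal translation system}. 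You instead prove 1)\,$\Leftrightarrow$\,3) directly (which is indeed immediate), prove 2)\,$\Leftrightarrow$\,3) by computing the Fourier coefficients of the periodized function $P$ on the compact quotient $\widehat G/\Lambda^\perp$ and invoking uniqueness of Fourier coefficients --- this is exactly the computation of Example \ref{translation}, but you bypass Theorem \ref{Result in HSWW}; it is in effect the Iosevich-style argument the paper discusses in Section \ref{comments} and asserts can be carried out for LCA groups and sub-tilings --- and you prove 1)\,$\Rightarrow$\,4) by periodizing $f$ and applying Parseval on $G/\Lambda$ via Weil's formula (\ref{Weil}) rather than by restriction from $L^2(Q_\Lambda)$; your bookkeeping of the constants ($|Q_\Lambda|\,|\Omega|$ in 2), tightness constant $|Q_\Lambda|$ in 4), via $|Q_\Lambda||Q_{\Lambda^\perp}|=1$, which the paper proves in the appendix) is correct. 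Your converse 4)\,$\Rightarrow$\,1) is essentially identical to the paper's: your $E$ and $E+\lambda_0$ are the paper's $\Omega_2=\Omega_1-\lambda_2$ and $\Omega_1=(Q_\Lambda+\lambda_1)\cap\Omega\cap(\Omega+\lambda_2)$ from (\ref{eq:claim}), and the cross-section restriction you defer to at the end is exactly how the paper secures disjointness. What your route buys is self-containedness (no appeal to the abstract dual-integrability theorem) and a proof of 1)\,$\Rightarrow$\,4) that avoids the ``$\Omega$ is contained in a cross section up to a null set'' step; what the paper's route buys is brevity, since the basis property of $E_{\Lambda^\perp}$ on $L^2(Q_\Lambda)$ and the quoted Corollary \ref{orthonormal translation system} absorb most of the computation. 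To make your sketch fully rigorous you only need the routine facts you already gesture at: $f\in L^1(G)$ (since $|\Omega|<\infty$) so the periodization is defined and, under sub-tiling, lies in $L^2(G/\Lambda)$; $P\in L^1(\widehat G/\Lambda^\perp)$ by Weil's formula; uniqueness of Fourier coefficients for $L^1$ functions on the compact quotient; and countability of $\Lambda$ to pick the translate of $Q_\Lambda$ meeting $\Omega\cap(\Omega+\lambda_0)$ in positive measure.
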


As a first corollary we can obtain the following result, which was proved by B. Fuglede in the Euclidean setting \cite{Fue74}, and in the present setting by S. Pedersen with a different approach (\cite{Pedersen}).
\begin{corollary}\label{cor:Pedersen}
A set of finite and positive measure $\Omega$ tiles $G$ with translations by $\Lambda$ if and only if the exponential set $E_{\Lambda^\perp}$ is an orthogonal basis for $L^2(\Omega)$.
\end{corollary}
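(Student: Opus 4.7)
My plan is to derive Corollary \ref{cor:Pedersen} as a direct consequence of Theorem \ref{main}. The first step is to observe that tiling is precisely sub-tiling together with the equality $|\Omega| = |Q_\Lambda|$. Indeed, integrating $\sum_{\lambda\in\Lambda} {\bf 1}_\Omega(g-\lambda)$ against ${\bf 1}_{Q_\Lambda}$ and using that $Q_\Lambda$ is a measurable set of representatives of $G/\Lambda$ yields $|\Omega|$, so, given sub-tiling, the sum equals $1$ a.e.\ on $G$ if and only if $|\Omega| = |Q_\Lambda|$.

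For the direction ``tiling $\Rightarrow$ orthogonal basis'': by Theorem \ref{main}, sub-tiling already gives that $E_{\Lambda^\perp}$ is a tight frame for $L^2(\Omega)$ with constant $|Q_\Lambda|$, which, by the first step, equals $|\Omega|$ in the tiling case. To upgrade this to an orthogonal basis, I would apply the tight frame identity to the frame elements themselves: for any $\tilde\mu \in \Lambda^\perp$,
$$
|Q_\Lambda|\,\|e_{\tilde\mu}\|^2_{L^2(\Omega)} \;=\; \sum_{\tilde\lambda\in\Lambda^\perp} |\langle e_{\tilde\mu}, e_{\tilde\lambda}\rangle_{L^2(\Omega)}|^2.
$$
Since $\|e_{\tilde\mu}\|^2_{L^2(\Omega)} = |\Omega| = |Q_\Lambda|$ and the $\tilde\lambda = \tilde\mu$ summand already contributes $|\Omega|^2$, all cross terms must vanish, which gives pairwise orthogonality. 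Completeness is automatic because a tight frame always spans a dense subspace of the ambient Hilbert space.

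For the converse: if $E_{\Lambda^\perp}$ is an orthogonal basis of $L^2(\Omega)$, then by Parseval it is a tight frame with constant $\|e_{\tilde\lambda}\|^2_{L^2(\Omega)} = |\Omega|$. Theorem \ref{main} then simultaneously gives the sub-tiling property and identifies the tight frame constant as $|Q_\Lambda|$. Comparing the two values forces $|\Omega| = |Q_\Lambda|$, and combined with sub-tiling this yields tiling by the initial observation.

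The only step that is not pure bookkeeping is the passage from tight frame to orthogonal basis under the matching condition ``frame constant $=$ squared norm of each element''; but as the one-line computation above shows, this is immediate. Once Theorem \ref{main} is in place, the corollary reduces to recognizing which quantitative equality distinguishes tiling from sub-tiling on each side of the equivalence.
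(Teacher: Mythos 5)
Your proposal is correct, and it reaches the corollary by a route that differs from the paper's in both directions. You reduce everything to Theorem \ref{main} together with the quantitative observation that, for a sub-tiling set, tiling is equivalent to $|\Omega|=|Q_\Lambda|$ (your integration of the periodization over $Q_\Lambda$ is a clean way to see this), and then you use the ``moreover'' clause of the theorem: in the forward direction you upgrade the tight frame with constant $|Q_\Lambda|=|\Omega|$ to an orthogonal basis via the standard fact that a tight frame whose elements have squared norm equal to the frame constant is orthogonal and complete; in the converse you play the Parseval constant $|\Omega|$ of the orthogonal basis against the tight frame constant $|Q_\Lambda|$ guaranteed by the theorem (uniqueness of the tight constant on the nontrivial space $L^2(\Omega)$ gives $|\Omega|=|Q_\Lambda|$), and conclude tiling from sub-tiling plus the equality of measures. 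The paper instead proves the forward implication by simply citing that a tiling set agrees a.e.\ with a cross section, on which $E_{\Lambda^\perp}$ is classically an orthogonal basis, and proves the converse by contradiction with a two-case split: if $\Omega$ is strictly sub-tiling it compares the Parseval identities on $L^2(\Omega)$ (constant $|\Omega|$) and on $L^2(Q_\Lambda)$ (constant $|Q_\Lambda|$) to force $|\Omega|=|Q_\Lambda|$, and if $\Omega$ is not sub-tiling it reuses the explicit function ${\bf 1}_{\Omega_1}-{\bf 1}_{\Omega_2}$ from the proof of 4)\,$\Rightarrow$\,1), which is orthogonal to every $e_{\tilde\lambda}$, so $E_{\Lambda^\perp}$ is not even a frame. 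What your version buys is a uniform, case-free derivation that treats the corollary as a formal consequence of the statement of Theorem \ref{main} (including its tightness constant), at the price of invoking the tight-frame-to-orthogonal-basis lemma and the uniqueness of the tight constant; the paper's version stays closer to first principles and makes the failure in the non-sub-tiling case explicit. Two small points you may wish to spell out: why two tight frame constants on a nonzero space must coincide (apply both identities to any nonzero $f$), and why the a.e.\ identity $\sum_\lambda {\bf 1}_\Omega(\cdot-\lambda)=1$ on $Q_\Lambda$ transfers to all of $G$ (periodicity plus countability of $\Lambda$); both are immediate but are the hinges of your bookkeeping.
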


Let us now denote with $M : \Lambda^\perp \to \mathcal{U}(L^2(\Omega))$  the modulations $M_{\tilde\lambda}f(g) = e_{\tilde\lambda}(g)f(g)$. As a second consequence of Theorem \ref{main} we obtain the following.
\begin{corollary}\label{cor:modulates}
Conditions 1) - 4) of Theorem \ref{main} are equivalent to\vspace{4pt}\\
\phantom{\hspace{10pt}} 5) The system of modulates $\Psi_{\Lambda^\perp} = \{M_{\tilde\lambda}\psi : \tilde\lambda\in\Lambda^\perp\}$ is a frame for $L^2(\Omega)$, with frame bounds $0 < A|Q_\Lambda| \leq B |Q_\Lambda| < \infty$, for any $\psi \in L^2(\Omega)$ satisfying $$0 < A \leq \textnormal{ess}\inf |\psi|^2 \leq \textnormal{ess}\sup |\psi|^2 \leq B < \infty.$$
\end{corollary}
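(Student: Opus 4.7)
My plan is to deduce Corollary \ref{cor:modulates} directly from Theorem \ref{main} (specifically from the tight frame property of $E_{\Lambda^\perp}$ stated in point 4), via a simple pointwise substitution.

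For the implication (4) $\Rightarrow$ (5), I would fix $\psi \in L^2(\Omega)$ with $A \leq |\psi|^2 \leq B$ essentially and, given any $f \in L^2(\Omega)$, observe that the multiplication operator $M_\psi : f \mapsto f\bar\psi$ maps $L^2(\Omega)$ into itself (since $|\psi|$ is essentially bounded by $\sqrt{B}$). The key identity is
\[
\langle f, M_{\tilde\lambda}\psi\rangle_{L^2(\Omega)} = \int_\Omega f(g)\,\overline{e_{\tilde\lambda}(g)\psi(g)}\, dg = \langle f\bar\psi, e_{\tilde\lambda}\rangle_{L^2(\Omega)}.
\]
Applying the tight frame equality from Theorem \ref{main}(4) to the function $f\bar\psi \in L^2(\Omega)$ gives
\[
\sum_{\tilde\lambda\in\Lambda^\perp} |\langle f, M_{\tilde\lambda}\psi\rangle|^2 = |Q_\Lambda|\, \|f\bar\psi\|_{L^2(\Omega)}^2 = |Q_\Lambda| \int_\Omega |f(g)|^2 |\psi(g)|^2\, dg.
\]
The essential bounds on $|\psi|^2$ then sandwich this quantity between $A|Q_\Lambda|\|f\|^2$ and $B|Q_\Lambda|\|f\|^2$, which is exactly the frame inequality with the claimed bounds.

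For the converse (5) $\Rightarrow$ (4), I would simply specialize $\psi = \mathbf{1}_\Omega$, which lies in $L^2(\Omega)$ and satisfies the essential bound hypothesis with $A = B = 1$. For this choice, $M_{\tilde\lambda}\mathbf{1}_\Omega$ coincides with the exponential $e_{\tilde\lambda}$ as an element of $L^2(\Omega)$, so the modulates frame condition in (5) reduces verbatim to the exponential frame condition in (4).

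There is no real obstacle: the entire content of the corollary is the observation that multiplication by $\bar\psi$ is a bounded invertible operator on $L^2(\Omega)$ whose norm and conorm are controlled by $\sqrt{B}$ and $\sqrt{A}$, so it conjugates the tight frame $E_{\Lambda^\perp}$ into a (generally non-tight) frame with bounds shifted by the corresponding factors. The only point worth checking carefully is that writing $\langle f, M_{\tilde\lambda}\psi\rangle = \langle f\bar\psi, e_{\tilde\lambda}\rangle$ is legitimate, i.e.\ that $f\bar\psi \in L^2(\Omega)$, which is immediate from the essential upper bound on $|\psi|$.
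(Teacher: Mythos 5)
Your proof is correct, and the first direction, 4)\,$\Rightarrow$\,5), is exactly the paper's argument: write $\langle f, M_{\tilde\lambda}\psi\rangle_{L^2(\Omega)} = \langle f\overline{\psi}, e_{\tilde\lambda}\rangle_{L^2(\Omega)}$, apply the tight frame identity to $f\overline{\psi}$, and sandwich using the essential bounds on $|\psi|^2$. Where you diverge is the converse. You exploit the universal quantifier in 5) by specializing to $\psi = {\bf 1}_\Omega$ (admissible with $A = B = 1$), for which the modulates are literally the exponentials, so 4) follows immediately; this is perfectly legitimate for the corollary as stated. The paper instead fixes an \emph{arbitrary} admissible $\psi$ and transfers the frame inequality back through the substitution $f \mapsto f/\overline{\psi}$, which is bounded precisely because $A > 0$, obtaining
\[
\frac{A}{B}|Q_\Lambda|\,\|f\|_{L^2(\Omega)}^2 \leq \sum_{\tilde\lambda\in\Lambda^\perp} |\langle f, e_{\tilde\lambda}\rangle_{L^2(\Omega)}|^2 \leq \frac{B}{A}|Q_\Lambda|\,\|f\|_{L^2(\Omega)}^2 .
\]
The payoff of the paper's route is a strictly stronger conclusion: the frame property for the modulates of a \emph{single} admissible generator $\psi$ already forces 4) (and hence, by the forward direction, the frame property for every admissible $\psi$), whereas your specialization needs 5) in its full ``for all $\psi$'' strength. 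Your closing remark that multiplication by $\overline{\psi}$ is a boundedly invertible operator with norm and conorm controlled by $\sqrt{B}$ and $\sqrt{A}$ is in fact exactly the mechanism behind the paper's stronger converse; had you run the converse through that observation rather than through the choice $\psi = {\bf 1}_\Omega$, the two proofs would coincide.
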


The novelty of this paper is that relates subtilings with frames of exponentials. Moreover, the proof of 2)\,$\Rightarrow$\,3) in Theorem \ref{main} is shown by using the bracket map of a system of translates (Corollary \ref{orthonormal translation system}) that have been introduced to study properties of translation invariant spaces (see \cite{HSWW} and the references therein.)

The setting of LCA groups allows to prove simultaneously results for a large variety of groups, namely $\mathbb R^n, \mathbb Z^d, \mathbb T^k$, and all finite groups $F$ with discrete topology, as well as the so called elementary LCA groups $G = \mathbb R^n\times \mathbb Z^d \times \mathbb T^k \times F$. Observe that knowing Theorem \ref{main} and Corollary \ref{cor:Pedersen} for every factor group does not immediately provide the corresponding results for $G$.

\

The motivation for this paper comes from the problem of studying the relationship between spectrum sets and tiling pairs, whose roots dates back to a 1974 paper of B. Fuglede (\cite{Fue74}). There he proved that a set $E \subset \RR^d$, $d\geq 1$, of positive Lebesgue measure, tiles $\RR^d$ by translations with a lattice $\Lambda$ if and only if $L^2(E)$ has an orthogonal basis of exponentials indexed by the annihilator of $\Lambda.$  A more general statement in $\RR^d$, which says that if $E \subset \RR^d$, $d\geq 1$, has positive Lebesgue measure, then $L^2(E)$ has an orthogonal basis of exponentials (not necessary indexed by a lattice) if and only if $E$ tiles $\RR^d$ by translations, has been known as the Fuglede Conjecture.

A variety of results were proved establishing connections between tiling and orthogonal exponential bases. See, for example, \cite{LRW00}, \cite{IP98}, \cite{L02}, \cite{KL03} and \cite{KL04}. In 2001, I. Laba proved that the Fuglede conjecture is true for the union of two intervals in the plane (\cite{L01}). In 2003, A. Iosevich, N. Katz and T. Tao (\cite{IKT03}) proved that the Fuglede conjecture holds for convex planar domains. It was also proved in \cite{IP98} and \cite{LRW00} that $\Lambda$ tiles $\mathbb{R}^d$ by the unit cube $Q^d$ if and only if $\Lambda$ is a spectral set for $Q^d$. In 2004, T. Tao (\cite{T04}) disproved the Fuglede Conjecture in dimension $d=5$ and larger, by exhibiting a spectral set in $\RR^{5}$ which does not tile the space by translations. In \cite{KM06}, M. Kolountzakis and M. Matolcsi also disproved the reverse implication of the Fuglede Conjecture for dimensions $d=4$ and higher. In \cite{FR06} and \cite{FMM06}, the dimension of counter-examples was further reduced. In fact, B. Farkas, M. Matolcsi and P. Mora show in \cite{FMM06} that the Fuglede conjecture is false in $\RR^3$. The general feeling in the field is that sooner or later the counter-examples of both implications will cover all dimensions. However, in \cite{IMP15} the authors  showed that the Fuglede Conjecture holds in two-dimensional vector spaces over prime fields. Then, in \cite{AI} the authors prove that tiling implies spectral in $\mathbb{Z}_p^3$, $p$ prime, and Fuglede conjecture is true for $\mathbb{Z}_2^3$ and $\mathbb{Z}_3^3$. Very recently, important developments in LCA groups, with crucial implications on sampling theory, have been developed by E. Agora, J. Antezana and C. Cabrelli in \cite{AAC}, where the authors could obtain a full characterization of Riesz bases for multi-tiling sets in LCA groups.

\vspace{1ex}
\noindent
{\bf Acknowledgements}: D. Barbieri was supported by a Marie Curie Intra European Fellowship (626055) within the 7th European Community Framework Programme. D. Barbieri and E. Hern\'andez were supported by Grant MTM2013-40945-P (Ministerio de Econom\'ia y Competitividad, Spain). A. Mayeli was supported by PSC-CUNY-TRADB-45-446, and by the Postgraduate Program of Excellence in Mathematics at Universidad Aut\'onoma de Madrid from June 19 to July 17, 2014, when this paper was started. The authors wish to thank Alex Iosevich for several interesting conversations regarding this paper and his expository paper on the Fuglede conjecture for lattices \cite{Iosevich-Expository}.

\section{Notations and Preliminaries} \label{Pre}

Let  $\Lambda$ be a lattice in an LCA group $G$. Denote by $Q_\Lambda \subset G$ a measurable cross section of $G/\Lambda.$  By definition, a cross section is a set of representatives of all cosets in $G/\Lambda $, so that the intersection of $Q_\Lambda$ with any coset $g+\Lambda$ has only one element. The existence of a Borel measurable cross section is guaranteed by \cite[Theorem 1]{FG68}.
Moreover, it is evident that ($Q_\Lambda$, $\Lambda$) is a tiling pair for $G$, while any tiling set $\Omega$ differs from a cross section at most for a zero measure set. \\

Let $d\dot g$ be a normalized Haar measure for $G/\Lambda$. Then the relation between Haar measure on $G$ and Haar measure for $G/\Lambda$ is given by {\it Weil\rq{}s formula}: for any function $f\in L^1(G)$,  the periodization map $\Phi(\dot g)=\sum_{\lambda\in \Lambda} f(g+\lambda), \ \dot g\in G/\Lambda$ is well defined almost everywhere in $G/\Lambda$, belongs to $L^1(G/\Lambda),$ and
\begin{align} \label{Weil}
\int_G f(g)dg = |Q_\Lambda| \int_{G/\Lambda} \sum_{\lambda\in\Lambda}  f(g+\lambda) d\dot g.
\end{align}
This formula is a special case of \cite[Theorem 3.4.6]{RS68}. The constant $|Q_\Lambda|$, called the \emph{lattice size}, appears in (\ref{Weil}) because $G/\Lambda$ is equipped with the normalized Haar measure $d\dot g$.

\begin{definition}[Dual integrable representations (\cite{HSWW})]
Let $G$ be an LCA group, and  let $\pi$ be a unitary representation of $G$ on a Hilbert space $\mathcal H$. We say $\pi$ is {\it dual integrable} if there exists a sesquilinear map $[\cdot, \cdot]_\pi: \mathcal H\times \mathcal H\to L^1(\widehat G)$, called {\it bracket map} for $\pi,$ such that
$$
\langle \phi, \pi(g)\psi\rangle_{\mathcal H} = \int_{\widehat G} [\phi, \psi]_\pi(\chi) e_{-g}(\chi) d\chi \quad  \forall \ g\in G \, , \ \ \forall \ \phi, \psi\in \mathcal H .
$$
\end{definition}

\begin{example}\label{translation}
Let $\Lambda$ be a lattice in an LCA group $G$. For any $\lambda\in \Lambda$, define $T_\lambda\phi(g)= \phi(g-\lambda)$ on $\phi\in L^2(G)$ and $M_\lambda h(\chi)= e_{\lambda}(\chi) h(\chi)$ on $h \in L^2(\widehat G)$.
Let us denote with $Q_{\Lambda^\perp}$ a cross section for the annihilator lattice $\Lambda^\perp$. Thus, by Plancherel formula (\ref{Plancherel}) and Weil's formula (\ref{Weil}) we have
\begin{align*}
\langle \phi, T_\lambda\psi\rangle_{L^2(G)} & = \langle \mathcal F_G (\phi) , M_\lambda \mathcal F_G (\psi)\rangle_{L^2(\widehat G)} = \int_{\widehat G} \mathcal F_G (\phi)(\chi) \overline{\mathcal F_G (\psi)(\chi)} e_{-\lambda}(\chi) d\chi \\
& = |Q_{\Lambda^\perp}| \int_{\widehat G/{\Lambda^\perp}} \sum_{\tilde\lambda\in\Lambda^\perp} \mathcal F_G (\phi)(\dot\chi+\tilde \lambda) \overline{\mathcal F_G (\psi)(\dot\chi+\tilde \lambda)} e_{-\lambda}(\dot\chi+\tilde\lambda) d\dot\chi\\
& = |Q_{\Lambda^\perp}| \int_{\widehat G/{\Lambda^\perp}} \sum_{\tilde\lambda\in\Lambda^\perp} \mathcal F_G (\phi)(\dot\chi+\tilde \lambda) \overline{\mathcal F_G (\psi)(\dot\chi+\tilde \lambda)} e_{-\lambda}(\dot\chi) d\dot\chi.
\end{align*}
Since $\mathcal F_G(\phi) \overline{\mathcal F_G (\psi)} \in L^1(\widehat G)$, we have that
$$
[\phi, \psi]_T(\dot\chi): = |Q_{\Lambda^\perp}| \sum_{\tilde \lambda\in \Lambda^\perp} \mathcal F_G (\phi) (\dot\chi+\tilde\lambda)\overline{\mathcal F_G (\psi)(\dot\chi+\tilde\lambda)}\, \qquad a. e. \quad \dot\chi\in \widehat G/{\Lambda^\perp}
$$
defines a sesquilinear map $[\cdot , \cdot]_T : L^2(G) \times L^2(G) \to L^1(\widehat G/\Lambda^\perp)$, so $T$ is a dual integrable representation of $\Lambda$ on $\mathcal H = L^2(G)$.
\end{example}

A relevant application of dual integrable representations is the possibility to characterize bases of unitary orbits in terms of their associated bracket maps. The following result has been proved in \cite[Proposition 5.1]{HSWW}.

\begin{theorem}\label{Result in HSWW}
Let $G$ be a countable abelian group, let $\pi$ be a dual integrable representation of $G$ on a Hilbert space $\mathcal H$, and let $\phi\in \mathcal H$. The system $\{\pi(g)\phi: g\in G\}$ is orthonormal in $\mathcal H$ if and only if $[\phi, \phi]_\pi(\chi)= 1$ for almost every $\chi\in \widehat G$.
\end{theorem}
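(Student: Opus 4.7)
The plan is to translate the orthonormality condition into an equality of Fourier coefficients on the compact group $\widehat G$, and then invoke uniqueness of Fourier series. First I would reduce the orthonormality of $\{\pi(g)\phi : g\in G\}$ to the single-parameter family of scalar identities
$$\langle \phi, \pi(k)\phi\rangle_{\mathcal H} = \delta_{k,0} \qquad \forall\, k\in G.$$
This uses only that $\pi$ is a unitary representation of an abelian group, so that $\langle \pi(g)\phi, \pi(h)\phi\rangle_{\mathcal H} = \langle \phi, \pi(h-g)\phi\rangle_{\mathcal H}$.

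Next, I would apply the defining property of the bracket map to rewrite the left-hand side as $\int_{\widehat G} [\phi,\phi]_\pi(\chi)\, e_{-k}(\chi)\, d\chi$, i.e. as the $k$-th Fourier coefficient of the $L^1(\widehat G)$ function $[\phi,\phi]_\pi$. Because $G$ is countable (hence discrete in the relevant sense), its dual $\widehat G$ is compact, the Haar measure $d\chi$ can be normalized so that $\{e_{-k}\}_{k\in G}$ is a complete orthonormal system in $L^2(\widehat G)$, and in particular $\int_{\widehat G} e_{-k}(\chi)\, d\chi = \delta_{k,0}$. The \emph{if} direction then follows immediately: if $[\phi,\phi]_\pi \equiv 1$ a.e., then $\langle \phi, \pi(k)\phi\rangle_{\mathcal H} = \int_{\widehat G} e_{-k}(\chi)\, d\chi = \delta_{k,0}$, and we are done.

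For the \emph{only if} direction, orthonormality says that all Fourier coefficients of $[\phi,\phi]_\pi$ coincide with those of the constant function $1$, so it suffices to conclude $[\phi,\phi]_\pi = 1$ a.e. from vanishing of all Fourier coefficients of $[\phi,\phi]_\pi - 1$. The main (mild) obstacle is precisely this uniqueness step, since $[\phi,\phi]_\pi$ lies only in $L^1(\widehat G)$ rather than $L^2$, so one cannot simply invoke the Parseval identity. The standard workaround on a compact abelian group is that trigonometric polynomials are dense in $C(\widehat G)$ by Stone--Weierstrass, and hence an $L^1$ function annihilating every character (under integration against its conjugate) must pair to zero against every continuous function, forcing it to vanish a.e. Applying this to $[\phi,\phi]_\pi - 1$ closes the argument.
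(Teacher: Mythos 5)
Your argument is correct. Note that the paper itself does not prove this statement: it is quoted from the literature (Proposition 5.1 of the cited work of Hern\'andez, \v{S}iki\'c, Weiss and Wilson), so there is no internal proof to compare against; your write-up is essentially the standard argument used there, namely reducing orthonormality to $\langle\phi,\pi(k)\phi\rangle=\delta_{k,0}$, recognizing these as the Fourier coefficients of the $L^1(\widehat G)$ function $[\phi,\phi]_\pi$, and invoking the uniqueness theorem for Fourier coefficients of $L^1$ functions on the compact group $\widehat G$ (your Stone--Weierstrass plus Riesz-representation route is a correct way to get that uniqueness). The only point worth making explicit is that the statement tacitly uses the probability Haar measure on $\widehat G$ (otherwise the $k=0$ coefficient gives $\|\phi\|^2=|\widehat G|$ rather than $1$), which you do acknowledge and which is consistent with the normalization implicit in the paper's Example 3 and Corollary 5.
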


As a consequence of Theorem \ref{Result in HSWW} and Example \ref{translation}, and of the basic fact $|Q_\Lambda| |Q_{\Lambda^\perp}|=1$ (for completeness, we have provided a proof in the appendix), we have the following result.

\begin{corollary}\label{orthonormal translation system}
Let $T$ and $\Lambda$ be as in Example \ref{translation}, and let $\phi\in L^2(G)$. Then the system of translates $\{T_\lambda \phi: \lambda\in \Lambda\}$ is an orthonormal system in $L^2(G)$ if and only if
$$
\sum_{\tilde\lambda\in \Lambda^\perp} |\mathcal F_G(\phi) (\chi+\tilde\lambda)|^2 = |Q_\Lambda| \quad  a.e. \ \ \chi\in \widehat G\,.
$$
\end{corollary}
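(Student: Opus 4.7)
The plan is to apply Theorem \ref{Result in HSWW} directly to the translation representation $T$ of $\Lambda$ on $\mathcal H = L^2(G)$ from Example \ref{translation}. The countability hypothesis on $\Lambda$ in Theorem \ref{Result in HSWW} is satisfied because $\Lambda$ is discrete in a second countable LCA group, so Theorem \ref{Result in HSWW} applies without issue.

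First I would recall the explicit form of the bracket map computed in Example \ref{translation}:
$$
[\phi,\phi]_T(\dot\chi) = |Q_{\Lambda^\perp}| \sum_{\tilde\lambda\in \Lambda^\perp} |\mathcal F_G(\phi)(\dot\chi+\tilde\lambda)|^2, \qquad \dot\chi \in \widehat G/\Lambda^\perp .
$$
By Theorem \ref{Result in HSWW}, $\{T_\lambda\phi:\lambda\in\Lambda\}$ is orthonormal in $L^2(G)$ if and only if $[\phi,\phi]_T(\dot\chi) = 1$ for almost every $\dot\chi$, which is equivalent to
$$
\sum_{\tilde\lambda\in \Lambda^\perp} |\mathcal F_G(\phi)(\dot\chi+\tilde\lambda)|^2 = \frac{1}{|Q_{\Lambda^\perp}|} \qquad \text{a.e.\ } \dot\chi\in\widehat G/\Lambda^\perp.
$$
Using the identity $|Q_\Lambda|\,|Q_{\Lambda^\perp}| = 1$ (assumed available from the appendix), the right-hand side equals $|Q_\Lambda|$.

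To finish, I would observe that the periodization $\chi \mapsto \sum_{\tilde\lambda\in\Lambda^\perp}|\mathcal F_G(\phi)(\chi+\tilde\lambda)|^2$ is a $\Lambda^\perp$-periodic function of $\chi\in\widehat G$, since shifting the argument by any $\tilde\lambda_0\in\Lambda^\perp$ merely relabels the summation over $\Lambda^\perp$. Hence the a.e.\ equality on the cross section $\widehat G/\Lambda^\perp$ is equivalent to the a.e.\ equality on all of $\widehat G$ stated in the corollary. This is a straightforward corollary with no real obstacle; the only point meriting explicit comment is the elementary identity $|Q_\Lambda|\,|Q_{\Lambda^\perp}| = 1$, which bridges the normalization of the bracket map to the measure of the cross section that appears in the conclusion.
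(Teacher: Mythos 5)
Your proposal is correct and follows exactly the route the paper intends: apply Theorem \ref{Result in HSWW} to the dual integrable translation representation of $\Lambda$ with the bracket computed in Example \ref{translation}, invoke $|Q_\Lambda|\,|Q_{\Lambda^\perp}|=1$ from the appendix, and use $\Lambda^\perp$-periodicity to state the identity a.e.\ on all of $\widehat G$. Nothing further is needed.
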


\section{Proof of Theorem \ref{main}}

In this section we shall prove Theorem \ref{main} and its corollaries.

\begin{proof}[Proof of  Theorem \ref{main}]\ \\

1)\,$\Rightarrow$\,4) It is well-known (\cite{Rudin-FA}) that, for any cross section $Q_\Lambda$, the exponential set $E_{\Lambda^\perp}$ is an orthogonal basis for $L^2(Q_\Lambda)$. Thus, for all $f \in L^2(Q_\Lambda)$,
\begin{equation}\label{eq:Plancherel}
\sum_{\tilde\lambda \in \Lambda^\perp} |\langle f, \frac{1}{\sqrt{|Q_\Lambda|}}e_{\tilde\lambda}\rangle_{L^2(Q_\Lambda)}|^2 = \|f\|^2_{L^2(Q_\Lambda)}\,.
\end{equation}
Since condition (1) says that $\Omega$ is contained in some cross section $Q_\Lambda$, then
the previous identity still holds for all $f \in L^2(\Omega)$. Hence $E_{\Lambda^\perp}$ is a tight frame for $L^2(\Omega)$ with constant $|Q_\Lambda|$.\\

4)\,$\Rightarrow$\,1) Suppose, by contradiction, that $\Omega$ is not a subtiling set. Then we claim that for all cross section $Q_\Lambda$  there exist $\lambda_1, \lambda_2 \in \Lambda$, $\lambda_2 \neq 0$, such that
\begin{equation}\label{eq:claim}
|(Q_\Lambda + \lambda_1) \cap \Omega \cap (\Omega + \lambda_2)| > 0.
\end{equation}
If this is true, then let $\Omega_1 = (Q_\Lambda + \lambda_1) \cap \Omega \cap (\Omega + \lambda_2)$, and $\Omega_2 = \Omega_1 - \lambda_2$. Both are subsets of $\Omega$ with positive measure and, since $\lambda_2 \neq 0$, they are disjoint because $\Omega_1 \subset Q_\Lambda + \lambda_1$ and $\Omega_2 \subset Q_\lambda + \lambda_1 - \lambda_2$. Therefore, the function
$$
f = {\bf 1}_{\Omega_1} - {\bf 1}_{\Omega_2}
$$
is nonzero and belongs to $L^2(\Omega)$. Then, for all $\tilde\lambda \in \Lambda^\perp$ we have
$$
\langle f, e_{\tilde\lambda}\rangle_{L^2(\Omega)} = \int_{\Omega_1} e_{\tilde\lambda}(g) dg - \int_{\Omega_2} e_{\tilde\lambda}(g) dg = \int_{\Omega_1} \big(e_{\tilde\lambda}(g) - e_{\tilde\lambda}(g - \lambda_2)\big) dg = 0 .
$$

This implies that the system $E_{\Lambda^\perp}$ can not be a frame for $L^2(\Omega)$.

In order to prove (\ref{eq:claim}), let us proceed by contradiction and suppose that for all $\lambda \in \Lambda$ and all $\lambda^* \in \Lambda$, $\lambda^* \neq 0$ we have\vspace{-2ex}
$$
|(Q_\Lambda + \lambda) \cap \Omega \cap (\Omega + \lambda^*)| = 0.\vspace{-2ex}
$$
Now take $\lambda' \in \Lambda$, $\lambda' \neq 0$. By definition of cross section, we have
$$
\Omega \cap (\Omega + \lambda') = \bigsqcup_{\lambda \in \Lambda} (Q_\Lambda + \lambda) \cap \Omega \cap (\Omega + \lambda')
$$
which implies that $|\Omega \cap (\Omega + \lambda')| = 0$. Hence, $\Omega$ would be a subtiling set of $G$ by $\Lambda$, which is a contradiction.

1)\,$\Rightarrow$\,2) Since (\ref{eq:Plancherel}) holds, we can obtain 2) by choosing $f = \overline{e_\chi}\,{\bf 1}_\Omega$.\\

2)\,$\Rightarrow$\,3) This follows as an application of Corollary \ref{orthonormal translation system}.\\

3)\,$\Rightarrow$\,1) By orthogonality, we have that for all $\lambda \in \Lambda$, $\lambda \neq 0$\vspace{-2ex}
$$
0 = \langle {\bf 1}_{\Omega}, {\bf 1}_{\Omega}(\cdot - \lambda)\rangle_{L^2(G)} = |\Omega \cap (\Omega + \lambda)|\vspace{-2ex}
$$
so $\Omega$ is sub-tiling.
\end{proof}

\begin{proof}[Proof of Corollary \ref{cor:Pedersen}]
If $(\Omega,\Lambda)$ is a tiling pair then it is well-known that $E_{\Lambda^\perp}$ is an orthogonal basis for $L^2(\Omega)$. To prove the converse, assume by contradiction that $\Omega$ is not tiling. Then one of the following cases holds
\begin{itemize}
\item[i.] $\Omega$ is a strictly sub-tiling set, i.e. there exists a cross section $Q_\Lambda$ such that $\Omega \subset Q_\Lambda$ and $|Q_\Lambda \setminus \Omega| > 0$.
\item[ii.] $\Omega$ is not a sub-tiling set, so that (\ref{eq:claim}) holds.
\end{itemize}
For case i., observe that the assumption of $E_{\Lambda^\perp}$ being an orthogonal basis for $L^2(\Omega)$ implies
$$
\sum_{\tilde\lambda \in \Lambda^\perp} |\langle f, \frac{1}{\sqrt{|\Omega|}}e_{\tilde\lambda}\rangle_{L^2(\Omega)}|^2 = \|f\|_{L^2(\Omega)}^2 \quad \forall \ f \in L^2(\Omega).
$$
On the other hand, since $E_{\Lambda^\perp}$ is an orthogonal basis for $L^2(Q_\Lambda)$, we have
$$
\sum_{\tilde\lambda \in \Lambda^\perp} |\langle f{\bf 1}_{\Omega}, \frac{1}{\sqrt{|Q_\Lambda|}}e_{\tilde\lambda}\rangle_{L^2(Q_\Lambda)}|^2 = \|f{\bf 1}_\Omega\|_{L^2(Q_\Lambda)}^2 \quad \forall \ f \in L^2(\Omega)
$$
so that $|\Omega| = |Q_\Lambda|$, which contradicts i.

For case ii., in Theorem \ref{main} we already proved that $E_{\Lambda^\perp}$ can not even be a frame.
\end{proof}

\begin{proof}[Proof of Corollary \ref{cor:modulates}]
Assume 4) holds, i.e. that $E_{\Lambda^\perp}$ is a tight frame for $L^2(\Omega)$ with constant $|Q_\Lambda|$. Then
$$
\sum_{\tilde\lambda \in \Lambda^\perp} |\langle f , M_{\tilde\lambda}\psi\rangle_{L^2(\Omega)}|^2 = \sum_{\tilde\lambda \in \Lambda^\perp} |\langle f \overline{\psi}, e_{\tilde\lambda}\rangle_{L^2(\Omega)}|^2 = |Q_{\Lambda}|\, \|f\overline{\psi}\|_{L^2(\Omega)}^2 \quad \forall \ f \in L^2(\Omega) \, .
$$
Since $A\|f\|_{L^2(\Omega)}^2 \leq \|f\overline{\psi}\|_{L^2(\Omega)}^2 \leq B \|f\|_{L^2(\Omega)}^2$, this proves 5). Conversely, assume 5) holds. Then, since $A > 0$, for all $f \in L^2(\Omega)$ we can write
$$
\sum_{\tilde\lambda \in \Lambda^\perp} |\langle f , e_{\tilde\lambda}\rangle_{L^2(\Omega)}|^2 = \sum_{\tilde\lambda \in \Lambda^\perp} |\langle f/\overline{\psi} , M_{\tilde\lambda}\psi\rangle_{L^2(\Omega)}|^2 \, ,
$$
so that, by the hypotheses on $\psi$, we get
$$
\frac{A}{B}|Q_{\Lambda}|\, \|f\|_{L^2(\Omega)}^2 \leq \sum_{\tilde\lambda \in \Lambda^\perp} |\langle f , e_{\tilde\lambda}\rangle_{L^2(\Omega)}|^2 \leq \frac{B}{A}|Q_{\Lambda}|\, \|f\|_{L^2(\Omega)}^2 \quad \forall \ f \in L^2(\Omega).
$$
Thus $E_{\Lambda^\perp}$ is a frame, hence proving 4). Observe that, by Theorem \ref{main}, this implies that $E_{\Lambda^\perp}$ is a tight frame with constant $|Q_\Lambda|$, hence improving the inequalities above.
\end{proof}

\section{Comments on related work} \label{comments}

The statement of Theorem \ref{main} relating sutilings pairs $(\Omega, \Lambda)$ for a lattice $\Lambda$ and frames of exponentials for $L^2(\Omega)$ is new. However, the proof has similarities with existing proofs of the similar statement for tiling sets by lattices and orthonormal bases of exponentials.

The result in Corollary \ref{cor:Pedersen} is proved by B. Fuglede (\cite{Fue74}, Lemma 6) for $G=\mathbb R^n$ and by S. Pedersen (\cite{Pedersen}, Theorem 3.6) for LCA groups.

In both papers, \cite{Fue74} and \cite{Pedersen}, as well as in the present work, the implication 1)\,$\Rightarrow$\,4) is done in the same way by observing that $E_{\Lambda^\perp}$ is an orthogonal basis of exponentials of $L^2(Q_\Lambda)$. 
As for the implication 4)\,$\Rightarrow$\,1) both papers give a direct proof of the fact that $\Omega \cap (\Omega + \lambda)$ have measure zero for all $\lambda \in \Lambda$ and that also the set $\displaystyle G \setminus \bigcup_{\lambda \in \Lambda}(\Omega + \lambda)$ has measure zero. In the present paper we give an argument by contradiction assuming that 1) does not hold and exhibiting a non-zero function in $L^2(\Omega)$ which is perpendicular to all $e_{\tilde \lambda}, \tilde \lambda \in \Lambda^\perp.$

The manuscript \cite{Iosevich-Expository} by A. Iosevich gives a proof of the equivalence of 1) and 2) in Theorem \ref{main} for a tiling set $\Omega \in \mathbb R^n$ by a lattice $\Lambda.$ It is then stated without proof that 2) and 4) are equivalent by a density argument. The proof of 1)\,$\Leftrightarrow$\,2) in \cite{Iosevich-Expository} goes as follows. Consider the functions 
$$
f(x) \equiv \sum_{\lambda \in \Lambda} {\bf 1}_\Omega (x+\lambda)\,, \quad x\in \mathbb R^n \qquad \text{and} \qquad H(\xi)\equiv \sum_{\tilde\lambda\in \Lambda^\perp} |\mathcal F_{\mathbb R^n}({\bf 1}_\Omega) (\xi+\tilde\lambda)|^2\,.
$$
Using Fourier Analysis it can be shown that, as a periodic function in $L^2(Q_{\Lambda^\perp})$, the Fourier coefficients of $H$ are
\begin{equation} \label{Eq:comments1}
\widehat H (\lambda) = \frac{|\Omega \cap (\Omega + \lambda)|}{|Q_{\Lambda^\perp}|}\,, \quad \lambda \in \Lambda\,,
\end{equation}
and the Fourier coefficients of $f$, as a periodic function in $L^2(Q_\Lambda)$, are
\begin{equation} \label{Eq:comments2}
\widehat f (\tilde \lambda) = \frac{\mathcal F_{\mathbb R^n} ({\bf 1}_\Omega)(\tilde \lambda)}{|Q_\Lambda|}\,, \quad \tilde\lambda \in \tilde \Lambda\,.
\end{equation}
Assuming that 1) of Theorem \ref{main} holds for a tiling set $\Omega$, equation (\ref*{Eq:comments1}) shows, using the Fourier inversion theorem, that $H$ is constant with value $|\Omega|\, |Q_\Lambda|$ a.e., since $|Q_\Lambda|\, |Q_{\Lambda^\perp}|=1.$ This shows 2) of Theorem \ref{main}. Conversely, assuming 2) of Theorem \ref{main} holds, equation (\ref*{Eq:comments1}) shows that $\Omega$ is a subtiling set of $\mathbb R^n$ by $\Lambda$. Equation (\ref*{Eq:comments2}) is then used to show that $f(x)=1$ a. e., which shows 1).

A proof along the lines described above can be designed for LCA groups and subtiles. In the proof given in the present work, we have proved the equivalence of 1), 2) and 3) in Theorem \ref{main} by using the notion of bracket map (see \cite{HSWW} and the references therein) and the characterization of frame sequences of translates of a single function along lattices stated in Theorem \ref{Result in HSWW} and Corollary \ref{orthonormal translation system}. As in \cite{Iosevich-Expository} our paper also uses Fourier Analysis to compute Fourier coefficients (see Example \ref{translation}).

\appendix

\section{Appendix}

We provide here a self-contained proof of the following basic result.
\begin{lemma}
Let $G$ be an LCA group, let $\widehat{G}$ be its dual group, and let the Haar measures $dg$ on $G$ and $d\chi$ on $\widehat{G}$ be chosen in such a way that the group Fourier transform be an isometry, i.e. such that (\ref{Plancherel}) holds. Let $\Lambda \subset G$ be a lattice subgroup of $G$, and denote with $Q_\Lambda \subset G$ a cross section for $G/\Lambda$, let $\Lambda^\perp \subset \widehat G$ be the dual lattice of $\Lambda$, and denote with $Q_{\Lambda^\perp} \subset \widehat{G}$ a cross section for $\widehat{G}/\Lambda^\perp$. Then
$$
|Q_\Lambda| |Q_{\Lambda^\perp}| = 1
$$
i.e. the product of the size of the lattice times the size of the dual lattice, computed with respect to Haar measures satisfying (\ref{Plancherel}), is 1.
\end{lemma}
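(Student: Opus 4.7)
The plan is to apply Plancherel's identity to the characteristic function $f=\mathbf{1}_{Q_\Lambda}$ and to exploit Weil's formula on $\widehat G$ together with Parseval's identity on the compact quotient $G/\Lambda$. Since $G/\Lambda$ is compact, the cross section has finite Haar measure, so $f\in L^1(G)\cap L^2(G)$. Trivially $\|f\|_{L^2(G)}^2=|Q_\Lambda|$, and Plancherel gives $\int_{\widehat G}|\mathcal F_G f(\chi)|^2\,d\chi=|Q_\Lambda|$. Applying Weil's formula (\ref{Weil}) on $\widehat G$ with respect to $\Lambda^\perp$ yields
$$
|Q_\Lambda|=|Q_{\Lambda^\perp}|\int_{\widehat G/\Lambda^\perp}F(\dot\chi)\,d\dot\chi,\qquad F(\dot\chi):=\sum_{\tilde\lambda\in\Lambda^\perp}|\mathcal F_G f(\chi+\tilde\lambda)|^2. \qquad (\star)
$$

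The central step is to show $F(\dot\chi)=|Q_\Lambda|^2$ for a.e.\ $\dot\chi$. For fixed $\chi\in\widehat G$ I would introduce $c_\chi\in L^\infty(G/\Lambda)$ defined by $c_\chi(\dot g):=\overline{\langle\chi,g\rangle}$, where $g$ is the representative of $\dot g$ in the Borel cross section $Q_\Lambda$. Because $|\langle\chi,g\rangle|=1$ one has $\|c_\chi\|_{L^2(G/\Lambda)}^2=1$ (the Haar measure $d\dot g$ is normalized). Since $\Lambda^\perp$ is canonically identified with the dual of $G/\Lambda$, the characters $\{\langle\tilde\lambda,\cdot\rangle:\tilde\lambda\in\Lambda^\perp\}$ form an orthonormal basis of $L^2(G/\Lambda)$, and Weil's formula on $G$ with respect to $\Lambda$ transforms the Fourier coefficients of $c_\chi$ into
$$
\widehat{c_\chi}(\tilde\lambda)=\int_{G/\Lambda}c_\chi(\dot g)\overline{\langle\tilde\lambda,g\rangle}\,d\dot g=\frac{1}{|Q_\Lambda|}\int_{Q_\Lambda}\overline{\langle\chi+\tilde\lambda,g\rangle}\,dg=\frac{1}{|Q_\Lambda|}\mathcal F_G f(\chi+\tilde\lambda).
$$
Parseval on $G/\Lambda$ then gives $1=\sum_{\tilde\lambda}|\widehat{c_\chi}(\tilde\lambda)|^2=F(\dot\chi)/|Q_\Lambda|^2$, so $F\equiv|Q_\Lambda|^2$. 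Inserting this into $(\star)$ and using that $d\dot\chi$ is normalized produces $|Q_\Lambda|=|Q_{\Lambda^\perp}|\,|Q_\Lambda|^2$, and dividing by $|Q_\Lambda|>0$ concludes the argument.

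I expect the main obstacle to be the careful bookkeeping of normalizations: Plancherel on $G/\Lambda$ must be set up with normalized Haar measure on the compact side paired against counting measure on $\Lambda^\perp$, while the measures on $G$ and $\widehat G$ are rigidly fixed by (\ref{Plancherel}), and the constants $|Q_\Lambda|$ and $|Q_{\Lambda^\perp}|$ enter via Weil's formula on each side. One must also avoid circularity: in particular, Corollary \ref{orthonormal translation system} cannot be invoked here, since its statement already uses the identity $|Q_\Lambda||Q_{\Lambda^\perp}|=1$. Once the bookkeeping is in place the rest is a direct computation.
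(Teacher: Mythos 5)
Your proof is correct and follows essentially the same route as the paper's: Plancherel applied to $\mathbf{1}_{Q_\Lambda}$, periodization of $|\mathcal F_G(\mathbf{1}_{Q_\Lambda})|^2$ over $\Lambda^\perp$, and evaluation of that periodization as the constant $|Q_\Lambda|^2$ via Parseval for the character basis of $L^2(G/\Lambda)$ --- which is exactly the paper's step of choosing $f=\overline{e_\chi}\,\mathbf{1}_{Q_\Lambda}$ in the orthogonal-basis identity (\ref{eq:Plancherel}) on $L^2(Q_\Lambda)$, merely transplanted to the quotient. Your remark about not invoking Corollary \ref{orthonormal translation system} to avoid circularity is also consistent with how the paper argues.
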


\begin{proof}
Observe first that, by (\ref{Plancherel}), we have
\begin{align}\label{dummy}
|Q_\Lambda| & = \int_G |{\bf 1}_{Q_\Lambda}(g)|^2 dg = \int_{\widehat{G}} |\mathcal{F}_G({\bf 1}_{Q_\Lambda})(\chi)|^2 d\chi \nonumber\\
& = \int_{Q_{\Lambda^\perp}} \sum_{\tilde{\lambda} \in \Lambda^\perp} |\mathcal{F}_G({\bf 1}_{Q_\Lambda})(\chi + \tilde{\lambda})|^2 d\chi
\end{align}
where the last identity is due to the fact that $(Q_{\Lambda^\perp},\Lambda^\perp)$ is a tiling pair for $\widehat{G}$ by definition of cross-section. We can thus apply the same argument used to prove point 2) of Theorem \ref{main}: since $\{e_{\tilde{\lambda}} \, : \, \tilde{\lambda} \in \Lambda^\perp\}$ is an orthogonal basis of $L^2(Q_\Lambda)$, then it satisfies (\ref{eq:Plancherel}), so by choosing $f = \overline{e_\chi}{\bf 1}_{Q_\Lambda}$ we get
$$
\sum_{\tilde{\lambda} \in \Lambda^\perp} |\mathcal{F}_G({\bf 1}_{Q_\Lambda})(\chi + \tilde{\lambda})|^2 = |Q_\Lambda|^2 \quad \textnormal{a.e.} \ \chi \in \widehat{G}.
$$
The claim then follows by inserting this identity in (\ref{dummy}).
\end{proof}

\end{document}